\newcommand {\R}{\mathbf{R}}
\newcommand {\Ss}{\mathbf{S}}
\newcommand{\Ric}{\operatorname{Ric}}
\newtheorem {thm} {Theorem}
\newtheorem {lemma} [thm] {Lemma}
\newtheorem {rmk} {Remark}
\begin{document} 

\title{On a fourth order conformal invariant} 
\author{Jesse Ratzkin \footnote{Institut f\"ur Mathematik, Universit\"at
W\"urzburg, {\tt jesse.ratzkin@mathematik.uni-wuerzburg.de}}}

\maketitle

\begin{abstract} 
In this note we prove that a fourth order conformal invariant on 
$\Ss^1 \times \Ss^{n-1}$ can be arbitrarily close to that of the round 
$n$-dimensional sphere, generalizing a result of Schoen about the 
classical Yamabe invariant. 
\end{abstract} 

\section{Introduction} 

In this note we consider a fourth order invariant related to the classical 
Yamabe invariant. 

\subsection{Basic definitions} 

Let $n \geq 5$ and let $(M,g)$ be a compact Riemannian 
manifold without boundary, and define 
\begin{equation} \label{q_curv_defn} 
Q_g = -\frac{1}{2(n-1)} \Delta_g R_g - \frac{2}{(n-2)^2} |\Ric_g |^2 + 
\frac{n^3 - 4n^2 + 16 n - 16}{8 (n-1)^2(n-2)^2} R_g^2 ,
\end{equation}  
where $\Delta_g$ is the Laplace-Beltrami operator, $R_g$ is the 
scalar curvature, and $\Ric_g$ is the Ricci curvature. We simplify 
the expression of of $Q_g$ in \eqref{q_curv_defn} by introducing 
the Schouten tensor 
\begin{equation} \label{schouten_defn} 
A_g = \frac{1}{n-2} \left ( \Ric_g - \frac{R_g g}{2(n-1)} \right ) , 
\qquad J_g = \operatorname{tr}_g(A_g) = \frac{R_g}{2(n-1)}, 
\end{equation} 
so that 
\begin{equation} \label{q_curv_defn2} 
Q_g = - \Delta_g J_g - 2 |A_g|^2 + \frac{n}{2} J_g^2. 
\end{equation} 

The $Q$-curvature transforms nicely under a conformal change, 
namely
\begin{equation} \label{trans_rule1} 
\widetilde g = u^{\frac{4}{n-4}} g \Rightarrow 
Q_{\widetilde g} = \frac{2}{n-4} u^{- \frac{n+4}{n-4}} P_g(u), 
\end{equation} 
where $P_g$ is the Paneitz operator 
\begin{equation} \label{paneitz_op_defn} 
P_g(u) = (-\Delta_g)^2 (u) + \operatorname{div} ( 4A_g (\nabla u, 
\cdot ) - (n-2) J_g \nabla u) + \frac{n-4}{2} Q_g u. 
\end{equation} 
In general the Paneitz operator transforms according to the 
rule 
\begin{equation} \label{trans_rule2}
\widetilde g = u^{\frac{4}{n-4}} g \Rightarrow P_{\widetilde g}(v) 
= u^{-\frac{n+4}{n-4}} P_g(uv). 
\end{equation} 
One recovers \eqref{trans_rule1} by substituting $v=1$ 
into \eqref{trans_rule2}. 

S. Paneitz \cite{Pan1} introduced the operator \eqref{paneitz_op_defn} 
and explored some of its transformation properties. Afterwards 
T. Branson \cite{Bran1, Bran2} extended Paneitz' definition and studied 
the associated $Q$-curvature. The interested reader can find excellent 
surveys in \cite{BG, CEOY, HY}. 

The transformation rules \eqref{trans_rule1} and \eqref{trans_rule2} 
motivate us to define the energy 
function $\mathcal{Q} : [g] \rightarrow \R$, where $[g] = \{ 
\widetilde g = u^{\frac{4}{n-4}} g : u \in \mathcal{C}^\infty (M) , 
u>0\}$ is the conformal class of $g$, by 
\begin{equation} \label{q_energy} 
\mathcal{Q} (\widetilde g) = \frac{\int_M Q_{\widetilde g} 
d\mu_{\widetilde g}}{ (\operatorname{Vol}_{\widetilde g} (M)
)^{\frac{n-4}{n}}} = \frac{2}{n-4} \frac{\int_M u P_g(u) 
d\mu_g} {\left ( \int_M u^{\frac{2n}{n-4}} d\mu_g \right )^{\frac{n-4}{n}}} .
\end{equation}
We then the conformal invariant 
\begin{eqnarray} \label{paneitz_inv1} 
\mathcal{Y}_4^+ ([g],M) & = & \inf_{\widetilde g \in [g]}
\mathcal{Q}(\widetilde g) = \inf \left \{ \frac{\int_M 
Q_{\widetilde g} d\mu_{\widetilde g}} {(\operatorname{Vol}
_{\widetilde g}(M))^{\frac{n-4}{n}}} : \widetilde g \in [g] \right \} 
\\ \nonumber 
& = & \inf \left \{ \frac{2}{n-4} \frac{\int_M u P_g(u) d\mu_g}
{\left ( \int_M u^{\frac{2n}{n-4}} d\mu_g\right )^{\frac{n-4}{n}}} : 
u \in \mathcal{C}^\infty (M) , u>0 \right \} ,
\end{eqnarray} 
which is a fourth-order analog of the famous Yamabe invariant,
and the differential invariant 
\begin{equation} \label{paneitz_inv2} 
\mathbb{Y}_4^+(M) = \sup_{[g] \in \mathfrak{c}} 
\mathcal{Y}_4^+ ([g],M)  = \sup_{[g] \in \mathfrak{c}}
\inf_{\widetilde g \in [g]} \left \{ \frac{ \int_M 
Q_{\widetilde g} d\mu_{\widetilde g} } { ( 
\operatorname{Vol}_{\widetilde g} (M))^{\frac{n-4}{n}}} 
\right \} , 
\end{equation} 
where $\mathfrak{c}$ is the space of conformal classes on the 
manifold $M$. 
The subscript $4$ in both $\mathcal{Y}_4^+$ and in $\mathbb{Y}_4^+$ 
refers to the fact that the underlying differential operator is fourth-order, 
while the $+$ refers to the fact that we require all test functions in the 
infimum for $\mathcal{Y}_4^+$ must all be positive. Naturally one may 
also define 
$$\mathcal{Y}_4([g],M) = \inf \left \{ \frac{2}{n-4} \frac{\int_M u 
P_g(u) d\mu_g} { \left ( \int_M |u|^{\frac{2n}{n-4}} d\mu_g \right 
)^{\frac{n-4}{n}} } : u \in \mathcal{C}^\infty(M), u \not \equiv 0 
\right  \},$$ 
and clearly $\mathcal{Y}_4([g],M) \leq \mathcal{Y}_4^+([g],M)$. 

\subsection{Scalar curvature and the Yamabe invariant} 

Much of the work devoted to the Paneitz operator $P_g$ 
and its associated $Q$-curvature is motivated by results about the total 
scalar curvature functional and its associated Yamabe invariant. 

Given a compact Riemannian manifold $(M,g)$ without boundary one 
defines the total scalar curvature functional on the conformal class 
$[g]$ as
\begin{equation} \label{tot_scal_curv1} 
\mathcal{R} (\widetilde g) = \frac{\int_M R_{\widetilde g} 
d\mu_{\widetilde g}} {(\operatorname{Vol}_{\widetilde g} (M) 
)^{\frac{n-2}{n}} }. 
\end{equation} 
One can simplify this expression using the transformation rule 
\begin{equation} \label{trans_rule3} 
\widetilde g = u^{\frac{4}{n-2}} g \Rightarrow R_{\widetilde g} 
= \frac{n-2}{4(n-1)} u^{-\frac{n+2}{n-2}} \mathcal{L}_g(u),
\end{equation} 
where $\mathcal{L}_g$ is the conformal Laplacian 
\begin{equation} \label{conf_lap} 
\mathcal{L}_g = -\Delta_g  + \frac{4(n-1)}{n-2} R_g ,
\end{equation} 
which enjoys the transformation rule 
\begin{equation} \label{trans_rule4} 
\widetilde g = u^{\frac{4}{n-2}} g \Rightarrow
\mathcal{L}_{\widetilde g} (v) = u^{-\frac{n+2}{n-2}} 
\mathcal{L}_g (uv). 
\end{equation} 
Observe that these transformation rules mean we can 
rewrite $\mathcal{R}$ as 
\begin{equation} \label{tot_scal_curv2} 
\mathcal{R}(u^{\frac{4}{n-2}} g ) = \frac{n-2}{4(n-1)} 
\frac{\int_M u \mathcal{L}_g(u) d\mu_g}{\left ( \int_M
u^{\frac{2n}{n-2}} d\mu_g \right )^{\frac{n-2}{n}}}.
\end{equation} 

The classical Yamabe invariants are 
\begin{eqnarray} \label{yam_inv1} 
\mathcal{Y}([g],M) & = & \inf_{\widetilde g \in [g]} \mathcal{R}
(\widetilde g) = \inf \left \{ \frac{\int_M R_{\widetilde g} d\mu_{\widetilde g}}
{(\operatorname{Vol}_{\widetilde g} (M))^{\frac{n-2}{n}}} : \widetilde g 
\in [g] \right \} \\ \nonumber 
& = & \inf \left \{ \frac{n-2}{4(n-1)} \frac{\int_M u \mathcal{L}_g(u) 
d\mu_g}{\left ( \int_M u^{\frac{2n}{n-2}} d\mu_g \right )^{\frac{n-2}{n}}}
: u \in \mathcal{C}^\infty(M) , u>0 \right \}
\end{eqnarray}
and 
\begin{equation} \label{yam_inv2} 
\mathbb{Y}(M) = \sup_{[g] \in \mathfrak{c}} \mathcal{Y}([g], M) 
= \sup_{[g] \in \mathfrak{c}} \inf_{\widetilde g \in [g]} 
\frac{\int_M R_{\widetilde g} d\mu_{\widetilde g}}{ (
\operatorname{Vol}_{\widetilde g} (M) )^{\frac{n-2}{n}}} . 
\end{equation} 
In contrast to the fourth-order case, in this situation the 
maximum principle implies 
$$\mathcal{Y}([g],M) = \inf  \left \{ \frac{n-2}{4(n-1)} \frac{\int_M u \mathcal{L}_g(u) 
d\mu_g}{\left ( \int_M |u|^{\frac{2n}{n-2}} d\mu_g \right )^{\frac{n-2}{n}}}
: u \in \mathcal{C}^\infty(M) , u \not \equiv 0 \right \} . 
$$
In particular, minimizing the functional $\mathcal{R}$ over 
all nontrivial functions in $W^{1,2}(M)$ will automatically yield 
a positive minimizer. On the other hand, minimizers of $\mathcal{Y}_4
([g],M)$, if they exist, might change sign. 

Yamabe \cite{Y} first defined these two invariants while investigating 
the the problem of finding a constant scalar curvature metric 
in a given conformal class. Aubin 
\cite{Aub} proved that $\mathcal{Y}([g],M) \leq \mathcal{Y}
([g_0],\Ss^n)$, where $g_0$ is the round metric on the sphere $\Ss^n$, 
and proved that if $\mathcal{Y}([g],M) < \mathcal{Y}([g_0],M)$ there 
exists a smooth, constant scalar curvature metric $\widetilde g \in [g]$
such that $\mathcal{R}(\widetilde g) = \mathcal{Y}([g],M)$. 
In \cite{Sch} Schoen completed Yamabe's program, proving 
that $\mathcal{Y}([g], M) < \mathcal{Y}([g_0], \Ss^n)$ for each 
conformal class $[g] \neq [g_0]$. In particular, $\mathcal{Y}([g],M) 
< \mathcal{Y}([g_0], \Ss^n)$ whenever $M$ is not the sphere. 

On the other hand, the equality $\mathbb{Y}(M) = \mathbb{Y}(\Ss^n)$ 
may occur even when $M$ is not the sphere. In particular, 
Schoen \cite{Sch_var} found an explicit sequence of metrics 
$g_k$ on the product $\Ss^1 \times \Ss^{n-1}$ such that 
$\mathcal{Y}([g_k], \Ss^1 \times \Ss^{n-1}) \rightarrow \mathcal{Y}
([g_0], \Ss^n)$ as 
$k \rightarrow \infty$, and so $\mathbb{Y}\mathbb(\Ss^1 
\times \Ss^{n-1}) = \mathbb{Y}(\Ss^n)$. As the underlying manifolds 
are not diffeomorphic, the equality above cannot be realized by 
a smooth metric on $\Ss^1 \times \Ss^{n-1}$. %In a related theorem 

\subsection{Previous results and our main theorem} 

We summarize some previous theorems regarding the invariant 
$\mathcal{Y}_4^+ ([g],M)$. Esposito 
and Robert \cite{ER} showed that $\mathcal{Y}_4^+([g],M)$ is 
finite for each conformal class $[g]$ on $M$. 
To state the next result we define 
$$\mathcal{Y}_4^* ([g],M) = \inf_{\widetilde g \in [g], R_{\widetilde g} > 0}
\mathcal{Q} (\widetilde g).$$
Gursky, Hang and Lin \cite{GHL} proved that if $n= \dim(M) \geq 6$ and 
if $\mathcal{Y}([g],M) > 0$ 
and $\mathcal{Y}_4^*([g],M)>0$ then 
$$\mathcal{Y}_4([g],M) = \mathcal{Y}_4^+ ([g],M) = \mathcal{Y}_4^* 
([g],M) .$$
Shortly thereafter Hang and Yang \cite{HY} proved that if $\mathcal{Y}([g],M)>0$ 
and $Q_g \geq 0$ with $Q_g \not \equiv 0$ then 
$$\mathcal{Y}_4([g],M) = \mathcal{Y}_4^+([g],M) \leq \mathcal{Y}_4 
([g_0], \Ss^n),$$ 
and that equality in the last inequality implies $[g] = [g_0]$. Moreover, 
under these hypotheses there exists a smooth, constant $Q$-curvature 
metric $\widetilde g \in [g]$ such that $\mathcal{Q}(\widetilde g) = 
\mathcal{Y}_4^+ ([g],M)$. 

Our main result is the following theorem. 
\begin{thm} \label{main_thm} 
There exists a sequence of metrics $g_k$ on the product 
$\Ss^1 \times \Ss^{n-1}$ such that $\mathcal{Y}_4^+ ([g_k], 
\Ss^1 \times \Ss^{n-1}) 
\rightarrow \mathcal{Y}_4^+ ([g_0], \Ss^n)$, where $g_0$ is the standard 
round metric on $\Ss^n$. As a consequence $\mathbb{Y}_4^+ 
(\Ss^1 \times \Ss^{n-1}) = \mathbb{Y}_4^+ (\Ss^n)$. 
\end{thm}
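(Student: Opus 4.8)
The plan is to adapt Schoen's construction for the ordinary Yamabe invariant. For $L>0$ let $g_L$ be the product metric on $\Ss^1(L)\times\Ss^{n-1}$, where $\Ss^1(L)$ has circumference $L$ and $\Ss^{n-1}$ carries the unit round metric. Because $g_L$ is homogeneous one has $\Delta_{g_L}R_{g_L}=0$ and $J_{g_L}\equiv\frac{n-2}{2}$, while $A_{g_L}$ has the constant eigenvalues $-\frac12$ (in the $\Ss^1$ direction) and $\frac12$ (in the $\Ss^{n-1}$ directions); substituting into \eqref{q_curv_defn2} gives the positive constant $Q_{g_L}\equiv\frac{n^2(n-4)}{8}$, and $R_{g_L}=(n-1)(n-2)>0$ forces $\mathcal{Y}([g_L],\Ss^1\times\Ss^{n-1})>0$. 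Thus $[g_L]$ satisfies the hypotheses of Hang and Yang's theorem recalled above, which gives at once the bound
\begin{equation}\label{thm_ub}
\mathcal{Y}_4^+([g_L],\Ss^1\times\Ss^{n-1})\le\mathcal{Y}_4^+([g_0],\Ss^n)
\end{equation}
and a smooth positive minimizer: a metric $\widetilde g_L=u_L^{4/(n-4)}g_L$ of constant $Q$-curvature with $\mathcal{Q}(\widetilde g_L)=\mathcal{Y}_4^+([g_L],\Ss^1\times\Ss^{n-1})$. Rescaling by a constant I normalize $\vol_{\widetilde g_L}(\Ss^1\times\Ss^{n-1})=1$, so that $u_L$ solves $P_{g_L}(u_L)=\frac{n-4}{2}\,\mathcal{Y}_4^+([g_L],\Ss^1\times\Ss^{n-1})\,u_L^{(n+4)/(n-4)}$. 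By \eqref{thm_ub} the theorem reduces to proving
\begin{equation}\label{thm_lb}
\liminf_{L\to\infty}\mathcal{Y}_4^+([g_L],\Ss^1\times\Ss^{n-1})\ge\mathcal{Y}_4^+([g_0],\Ss^n).
\end{equation}

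To obtain \eqref{thm_lb} I would use the conformal equivalence between the cylinder and the punctured sphere. On $\R\times\Ss^{n-1}$ with $g_{\mathrm{cyl}}=dt^2+g_{\Ss^{n-1}}$ one has $g_0=\varphi^{4/(n-4)}g_{\mathrm{cyl}}$, $\varphi(t)=(\cosh t)^{-(n-4)/2}$, under the standard identification of $\R\times\Ss^{n-1}$ with $\Ss^n$ minus two antipodal points; moreover $P_{g_{\mathrm{cyl}}}$ is a positive operator, as one sees from its factorization into second-order pieces. By conformal covariance of $P$, the substitution $v=\varphi\cdot(w\circ\iota)$, $\iota:\R\times\Ss^{n-1}\hookrightarrow\Ss^n$, turns $\mathcal{Q}(w^{4/(n-4)}g_0)$ into
\begin{equation}\label{cyl_id}
\mathcal{Q}\big(w^{4/(n-4)}g_0\big)=\frac{2}{n-4}\,\frac{\int_{\R\times\Ss^{n-1}}v\,P_{g_{\mathrm{cyl}}}(v)\,d\mu_{g_{\mathrm{cyl}}}}{\big(\int_{\R\times\Ss^{n-1}}v^{2n/(n-4)}\,d\mu_{g_{\mathrm{cyl}}}\big)^{(n-4)/n}},
\end{equation}
and a routine approximation (replace $w$ by $w+\epsilon$ and let $\epsilon\to0$) shows that the right-hand side of \eqref{cyl_id} is $\ge\mathcal{Y}_4^+([g_0],\Ss^n)$ for every $v\in C^\infty_c(\R\times\Ss^{n-1})$ with $v\ge0$, $v\not\equiv0$.

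The heart of the matter is then a cut-and-paste estimate. Lift $u_L$ to the $L$-periodic function $\bar u_L$ on $\R\times\Ss^{n-1}$; over a fundamental domain $\mathcal{F}$ one has $\frac{2}{n-4}\int_{\mathcal F}\bar u_L\,P_{g_{\mathrm{cyl}}}(\bar u_L)\,d\mu_{g_{\mathrm{cyl}}}=\mathcal{Y}_4^+([g_L],\Ss^1\times\Ss^{n-1})$ and $\int_{\mathcal F}\bar u_L^{2n/(n-4)}\,d\mu_{g_{\mathrm{cyl}}}=1$. A pigeonhole argument lets me choose the fundamental domain $[\tau_L-L/2,\tau_L+L/2]\times\Ss^{n-1}$ so that $\bar u_L$ carries at most $C/L$ of its $L^{2n/(n-4)}$-mass on a unit-width slab about the seam $\{t\equiv\tau_L+L/2\}$; feeding this into the $\epsilon$-regularity theory for the critical Paneitz equation (in the spirit of \cite{ER}), applied to the minimizer $\bar u_L$, I obtain that $\bar u_L$ and its derivatives up to order four are uniformly small near the seam. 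Cutting $\bar u_L$ off there against a fixed-profile cutoff $\chi_L$ produces $v_L:=\chi_L\bar u_L\in C^\infty_c(\R\times\Ss^{n-1})$, $v_L\ge0$, $v_L\not\equiv0$, and the cutoff changes both $\int v_L\,P_{g_{\mathrm{cyl}}}(v_L)\,d\mu_{g_{\mathrm{cyl}}}$ and $\int v_L^{2n/(n-4)}\,d\mu_{g_{\mathrm{cyl}}}$ by only $o(1)$ as $L\to\infty$. Combining this with the lower bound just obtained for compactly supported test functions yields
\[
\mathcal{Y}_4^+([g_0],\Ss^n)\le\frac{2}{n-4}\,\frac{\int_{\R\times\Ss^{n-1}}v_L\,P_{g_{\mathrm{cyl}}}(v_L)\,d\mu_{g_{\mathrm{cyl}}}}{\big(\int_{\R\times\Ss^{n-1}}v_L^{2n/(n-4)}\,d\mu_{g_{\mathrm{cyl}}}\big)^{(n-4)/n}}=\mathcal{Y}_4^+([g_L],\Ss^1\times\Ss^{n-1})+o(1),
\]
which is \eqref{thm_lb}. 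I expect the cutoff estimate to be the main obstacle: one must control the $W^{2,2}$-norm of the minimizer $\bar u_L$ on the seam slab by a small power of its $L^{2n/(n-4)}$-mass there, which calls for the Euler--Lagrange equation of $\bar u_L$ together with interior elliptic estimates for $P_{g_{\mathrm{cyl}}}$ and small-energy regularity. (Equivalently, one could organize the same input as a concentration--compactness argument in which any lost mass reassembles into standard bubbles, each contributing exactly $\mathcal{Y}_4^+([g_0],\Ss^n)$ by C.-S.\ Lin's classification of the positive entire solutions of $(-\Delta)^2 v=c_n v^{(n+4)/(n-4)}$ on $\R^n$.)

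Finally, \eqref{thm_ub} and \eqref{thm_lb} give $\mathcal{Y}_4^+([g_L],\Ss^1\times\Ss^{n-1})\to\mathcal{Y}_4^+([g_0],\Ss^n)$. For the statement about $\mathbb{Y}_4^+$ this yields $\mathbb{Y}_4^+(\Ss^1\times\Ss^{n-1})\ge\sup_{L>0}\mathcal{Y}_4^+([g_L],\Ss^1\times\Ss^{n-1})=\mathcal{Y}_4^+([g_0],\Ss^n)$, while the fourth-order analogue of Aubin's inequality $\mathcal{Y}_4^+([h],M)\le\mathcal{Y}_4^+([g_0],\Ss^n)$, valid for every conformal class on every closed $n$-manifold $M$, gives both $\mathbb{Y}_4^+(\Ss^n)=\mathcal{Y}_4^+([g_0],\Ss^n)$ and $\mathbb{Y}_4^+(\Ss^1\times\Ss^{n-1})\le\mathcal{Y}_4^+([g_0],\Ss^n)$. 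Hence $\mathbb{Y}_4^+(\Ss^1\times\Ss^{n-1})=\mathbb{Y}_4^+(\Ss^n)$.
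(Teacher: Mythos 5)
Your architecture is sound and genuinely different from the paper's route. You take from Hang--Yang exactly what the paper takes (the strict upper bound and a smooth positive constant-$Q$-curvature minimizer; incidentally your value $Q_{g_L}=\tfrac{n^2(n-4)}{8}$ is the correct one --- the paper's $\tfrac{(n-1)((n-1)^2-4)}{8}$ is a slip, and only positivity matters), but for the convergence you propose a cut-and-paste argument: pigeonhole a low-mass seam, get $C^4$-smallness of the lifted minimizer there by small-energy regularity, cut off, and compare against $\mathcal{Y}_4^+([g_0],\Ss^n)$ through the cylinder/punctured-sphere conformal correspondence. The paper never does any local analysis on the minimizer: it invokes the Frank--K\"onig classification \cite{FK} of positive solutions of \eqref{paneitz_ode1}, rules out the cylinder and the multi-bump Delaunay solutions by a nodal-domain/stability count, so that the minimizer is the Delaunay metric $v_a$ with $T_a=T$, and then computes $\mathcal{Q}(g_{v_a})$ explicitly and lets $a\nearrow 1$, using $v_a\to v_{sph}$. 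If completed, your route proves a bit more (convergence along every $L\to\infty$, not just along the Delaunay periods) and needs no classification theorem; the paper's route buys an explicit identification of the near-extremal metrics and avoids all $\epsilon$-regularity machinery.

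The gap is the step you flag yourself: the uniform smallness of $\bar u_L$ and its derivatives near the seam, hence the $o(1)$ cutoff errors, is asserted rather than proved, and it is the heart of your lower bound. You need a universal threshold statement --- if a positive solution of $P_{g_{\mathrm{cyl}}}u=c_L\,u^{(n+4)/(n-4)}$ (with $c_L$ merely bounded) has $L^{2n/(n-4)}$-mass below $\epsilon_0$ on a slab, then $u$ is small in $C^2$ (at least) on an interior slab, uniformly in $L$. That is not contained in \cite{ER} in this form, and for a fourth-order operator one cannot lean on the maximum principle, so one must actually run a Green's-function or Moser-type iteration for $P_{g_{\mathrm{cyl}}}$ (or carry out the concentration-compactness alternative you sketch). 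Until that lemma is established, what you have is a credible program rather than a proof. Two smaller points: positivity of $P_{g_{\mathrm{cyl}}}$ is true, but not by the naive factorization --- $\bigl(-\Delta_{\mathrm{cyl}}+\tfrac{n^2}{4}\bigr)\bigl(-\Delta_{\mathrm{cyl}}+\tfrac{(n-4)^2}{4}\bigr)$ differs from $P_{g_{\mathrm{cyl}}}$ by $4\Delta_\theta$; it is cleanest via the conformal correspondence with $\Delta^2$ on $\R^n\setminus\{0\}$. And your final step invokes a fourth-order Aubin-type inequality $\mathcal{Y}_4^+([h],M)\le\mathcal{Y}_4^+([g_0],\Ss^n)$ for arbitrary conformal classes; this does follow from a standard concentrating-bubble computation (the paper is equally terse about why $\mathbb{Y}_4^+(\Ss^n)=\mathcal{Y}_4^+([g_0],\Ss^n)$ and why the supremum over conformal classes on $\Ss^1\times\Ss^{n-1}$ cannot exceed it), but it deserves a proof or a precise reference.
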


\begin{rmk} The theorem of Hang and Yang \cite{HY} 
referenced above implies 
the equality $\mathbb{Y}_4^+ (\Ss^{n-1} \times \Ss^1) = \mathbb{Y}_4^+
(\Ss^n)$ cannot be realized by a smooth metric on $\Ss^{n-1} \times 
\Ss^1$.  
\end{rmk}

We base our proof of Theorem \ref{main_thm} on the explicit 
examples of the Delaunay metrics recently discovered by 
Frank and K\"onig \cite{FK}, following the example of 
R. Schoen \cite{Sch_var}. 

\section{Proof of our main theorem} 

In this section we present a proof of Theorem \ref{main_thm}, 
using the Delaunay metrics of Frank and K\"onig as our sequence 
of metrics. We first present some preliminary facts we require 
in our proof, and then carefully describe the Delaunay metrics, 
verifying some of their properties. Finally we complete the 
proof of Theorem \ref{main_thm}. 

\subsection{Preliminaries} 

We begin with the well-known variational characterization of 
constant $Q$-curvature metrics. One can find the following 
computation in \cite{Rob}, among other places, but we 
include it for the reader's convenience. 

It will be convenient to let $p^\# = \frac{2n}{n-4}$, let $\| \cdot \|_p$
denote the $L^p$-norm on $(M, d\mu_g)$, and define the bilinear 
form $\mathcal{E} (u,v) = \int_M v P_g(u) d\mu_g$. Observe that 
\begin{eqnarray} \label{paneitz_bilin_form} 
\mathcal{E} (u,v) & = & \int_M v P_g(u) d\mu_g \\ \nonumber 
& = & \int_M v \left ( \Delta_g ^2 u+ 4 \operatorname{div} (A_g(\nabla u, \cdot)
- (n-2) \operatorname{div} (J_g \nabla u ) + \frac{n-4}{2} Q_g u \right ) d\mu_g
\\ \nonumber 
& = & \int_M \Delta_g v \Delta_g u - 4 A_g(\nabla u, \nabla v) + (n-2) J_g 
\langle \nabla u, \nabla v \rangle  + \frac{n-4}{2} Q_g uv d\mu_g 
\\ \nonumber 
& = & \int_M u \left ( \Delta_g ^2 v+ 4 \operatorname{div} (A_g(\nabla v, \cdot)
- (n-2) \operatorname{div} (J_g \nabla v ) + \frac{n-4}{2} Q_g v \right ) d\mu_g
\\ \nonumber 
& = & \mathcal{E} (v,u) ,
\end{eqnarray} 
and so $\mathcal{E}$ is symmetric. We denote $\mathcal{E}(u,u) 
= \mathcal{E}(u)$. 

\begin{lemma} 
Let $W^{2,2}_+(M)$ denote the subspace of (almost everywhere) 
positive functions in $W^{2,2}(M)$. The functional 
$$ W^{2,2}_+(M) \ni 
u \mapsto \mathcal{Q}(u^{\frac{4}{n-4}} g)  $$
is differentiable and its total derivative is 
\begin{equation} \label{derivative_paneitz_func}
D\mathcal{Q}(u) (v) = \frac{4}{(n-4) \| u \|_{p^\#}^2} \int_M
v \left ( P_g(u) - \| u \|_{p^\#}^{-p^\#} \mathcal{E}(u) u^{\frac{n+4}{n-4}}
\right ) d\mu_g .
\end{equation} 
\end{lemma}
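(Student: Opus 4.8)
The plan is to compute the derivative of the quotient functional $\mathcal{Q}(u^{4/(n-4)}g) = \frac{2}{n-4}\mathcal{E}(u)\|u\|_{p^\#}^{-2}$ by first establishing the requisite smoothness. I would begin by noting that $\mathcal{E}$ is a bounded symmetric bilinear form on $W^{2,2}(M)$: the top-order term $\int_M \Delta_g u\, \Delta_g v\, d\mu_g$ is obviously bounded, and the lower-order terms involving $A_g$, $J_g$, and $Q_g$ are bounded because these curvature tensors are smooth (hence bounded) on the compact manifold $M$. Consequently $u \mapsto \mathcal{E}(u)$ is a continuous quadratic form and is Fr\'echet differentiable with $D[\mathcal{E}(u)](v) = 2\mathcal{E}(u,v) = 2\int_M v\, P_g(u)\, d\mu_g$, using the symmetry established in \eqref{paneitz_bilin_form}.

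Next I would handle the denominator. Since $n \geq 5$, we have $p^\# = \frac{2n}{n-4}$, and the Sobolev embedding $W^{2,2}(M) \hookrightarrow L^{p^\#}(M)$ holds (indeed $\frac{2n}{n-4}$ is precisely the critical Sobolev exponent for $W^{2,2}$ in dimension $n$), so $u \mapsto \|u\|_{p^\#}$ is well-defined and continuous on $W^{2,2}(M)$. On the open subset $W^{2,2}_+(M)$ of a.e.-positive functions, $u \mapsto \int_M u^{p^\#}\, d\mu_g$ is Fr\'echet differentiable with derivative $v \mapsto p^\# \int_M u^{p^\#-1} v\, d\mu_g$; one checks this by the standard argument using the mean value theorem applied to $t \mapsto t^{p^\#}$ together with H\"older's inequality and the Sobolev embedding to control the remainder. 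Hence $u \mapsto \|u\|_{p^\#}^{-2} = (\int_M u^{p^\#})^{-2/p^\#}$ is differentiable with derivative $v \mapsto -2\|u\|_{p^\#}^{-2-p^\#}\int_M u^{p^\#-1} v\, d\mu_g$, noting that $p^\# - 1 = \frac{n+4}{n-4}$.

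Finally I would combine these via the product rule. Writing $\mathcal{Q}(u^{4/(n-4)}g) = \frac{2}{n-4}\,\mathcal{E}(u)\,\|u\|_{p^\#}^{-2}$, the product rule gives
\begin{equation*}
D\mathcal{Q}(u)(v) = \frac{2}{n-4}\left( 2\|u\|_{p^\#}^{-2}\int_M v\, P_g(u)\, d\mu_g - 2\,\mathcal{E}(u)\,\|u\|_{p^\#}^{-2-p^\#}\int_M u^{\frac{n+4}{n-4}} v\, d\mu_g \right),
\end{equation*}
and collecting terms yields exactly \eqref{derivative_paneitz_func}. The main obstacle is the careful verification that the nonlinear functional $u \mapsto \int_M u^{p^\#}\,d\mu_g$ is genuinely Fr\'echet (not merely G\^ateaux) differentiable at the critical exponent; this requires the continuity of the Sobolev embedding $W^{2,2} \hookrightarrow L^{p^\#}$ and an estimate on $|(u+v)^{p^\#} - u^{p^\#} - p^\# u^{p^\#-1}v|$ that, after integration and application of H\"older, is $o(\|v\|_{W^{2,2}})$. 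Everything else is a routine application of the product and chain rules once boundedness of the curvature terms on the compact manifold is invoked.
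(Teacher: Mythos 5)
Your proof is correct and follows essentially the same route as the paper: differentiate the numerator as a bounded quadratic form ($D\mathcal{E}(u)(v)=2\mathcal{E}(u,v)$ by symmetry) and the denominator via differentiability of the critical norm $u\mapsto\|u\|_{p^{\#}}^{-2}$ (using $W^{2,2}(M)\hookrightarrow L^{2n/(n-4)}(M)$), then combine — the paper merely packages this as an explicit first-order expansion of $\mathcal{Q}((u+tv)^{\frac{4}{n-4}}g)$ in $t$, taking $u$ continuous and $\sup|v|<\tfrac{1}{2}\inf u$ so that $u+tv$ stays in the positive cone. The one inaccuracy is your claim that $W^{2,2}_+(M)$ is open: for $n\geq 5$ we have $W^{2,2}(M)\not\hookrightarrow C^0(M)$, so the a.e.-positive functions do not form an open set (and $(u+v)^{p^{\#}}$ is not even defined for sign-changing perturbations); the standard repair is to differentiate $u\mapsto\int_M|u|^{p^{\#}}d\mu_g$, which is $C^1$ on $L^{p^{\#}}(M)$, and then restrict to positive $u$ — the same positivity subtlety the paper sidesteps with its pointwise smallness assumption on $v$.
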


\begin{proof} 
Let $u \in W^{2,2}_+(M) \cap \mathcal{C}^0(M)$ and choose $v \in W^{2,2}(M)$ such that 
$0<\sup |v| < \frac{1}{2} \inf u$, which is possible because $M$ is a 
compact manifold. In particular, $u+tv \in W^{2,2}_+(M)$ for 
$0<t<3/2$. Expanding \eqref{paneitz_bilin_form} we obtain 
\begin{eqnarray} \label{derivative1} 
\mathcal{E}(u+tv) & = & \mathcal{E}(u) + 2t\mathcal{E}(u,v) + t^2 
\mathcal{E}(v) \\ \nonumber 
& = & \int_M u P_g (u) d\mu_g + 2t \int_M v P_g(u) d\mu_g 
+ t^2 \int_M v P_g(v) d\mu_g. \end{eqnarray} 
On the other hand we have 
\begin{eqnarray} \label{derivative2} 
\left ( \int_M (u+tv)^{\frac{2n}{n-4}} d\mu_g \right )^{\frac{4-n}{n}} & = & 
\left ( \int_M u^{\frac{2n}{n-4}} d\mu_g \right )^{\frac{4-n}{n}}  \\ \nonumber 
&& -2t \left ( \int_M u^{\frac{2n}{n-4}} d\mu_g \right )^{\frac{4-2n}{n}} 
\left ( \int_M v u^{\frac{n+4}{n-4}} d \mu_g \right ) + 
\mathcal{O} (t^2 \| v \|_{W^{2,2}(M)}^2) \\ \nonumber 
& = & \| u \|_{p^\#}^{-2} \left ( 1 - 2t \| u \|_{p^\#}^{-p^\#} \int_M 
v u^{\frac{n+4}{n-4}} d\mu_g + \mathcal{O} (t^2 \| v \|_{W^{2,2}(M)}) 
\right ) . 
\end{eqnarray} 
Combining \eqref{derivative1} and \eqref{derivative2} gives 
$$ \mathcal{Q}(u+tv) = \frac{2}{(n-4) \| u \|_{p^\#}^2} \left ( \mathcal{E}(u) 
+ 2t \int_M v \left ( P_g(u) - \mathcal{E}(u) \| u \|_{p^\#}^{-p^\#} u^{\frac{n+4}{n-4}} 
\right ) d\mu_g  + \mathcal{O} (t^2 \| v\|_{W^{2,2}(M)}^2 ) \right ) , $$ 
which completes the proof.
\end{proof} 

\begin{lemma} The metric $\widetilde g = u^{\frac{4}{n-4}} g$ is a critical 
point of the functional $\mathcal{Q}$ if and only if its $Q$-curvature 
$Q_{\widetilde g}$ is constant. 
\end{lemma}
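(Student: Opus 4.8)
The plan is to use the formula for the total derivative
$D\mathcal{Q}(u)$ from the previous lemma, together with the
transformation rule \eqref{trans_rule1}, and argue in both directions.

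First I would show that constancy of $Q_{\widetilde g}$ implies $u$ is a
critical point. Suppose $Q_{\widetilde g} = c$ for some constant $c$.
By \eqref{trans_rule1} this means $\frac{2}{n-4} P_g(u) = c\,
u^{\frac{n+4}{n-4}}$. Multiplying by $u$ and integrating gives
$\frac{2}{n-4} \mathcal{E}(u) = c \int_M u^{\frac{2n}{n-4}}\,d\mu_g
= c \|u\|_{p^\#}^{p^\#}$, so that $c = \frac{2}{n-4}
\mathcal{E}(u) \|u\|_{p^\#}^{-p^\#}$. Substituting back, $P_g(u) =
\frac{n-4}{2} c\, u^{\frac{n+4}{n-4}} = \mathcal{E}(u)
\|u\|_{p^\#}^{-p^\#} u^{\frac{n+4}{n-4}}$, which is exactly the
statement that the integrand in \eqref{derivative_paneitz_func}
vanishes identically; hence $D\mathcal{Q}(u)(v) = 0$ for all admissible
$v$, so $\widetilde g$ is a critical point.

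Conversely, suppose $\widetilde g = u^{\frac{4}{n-4}} g$ is a critical
point, so that for every $v \in W^{2,2}(M)$ with small enough sup-norm
(equivalently, by scaling, for every $v \in W^{2,2}(M)$) we have
$\int_M v \bigl( P_g(u) - \|u\|_{p^\#}^{-p^\#} \mathcal{E}(u)
u^{\frac{n+4}{n-4}} \bigr)\,d\mu_g = 0$. Since smooth functions are
dense in $W^{2,2}(M)$ and the bracketed expression lies in $L^1$ (and in
fact better, by elliptic regularity for $P_g$ applied to $u$), the
fundamental lemma of the calculus of variations forces
$P_g(u) = \|u\|_{p^\#}^{-p^\#} \mathcal{E}(u) u^{\frac{n+4}{n-4}}$
pointwise almost everywhere. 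Applying \eqref{trans_rule1} once more,
$Q_{\widetilde g} = \frac{2}{n-4} u^{-\frac{n+4}{n-4}} P_g(u) =
\frac{2}{n-4} \|u\|_{p^\#}^{-p^\#} \mathcal{E}(u)$, which is a constant.

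I do not expect a serious obstacle here; the lemma is essentially a
repackaging of the Euler--Lagrange computation. The one point requiring a
little care is the regularity needed to pass from the weak (integrated)
identity to the pointwise one and to make sense of $Q_{\widetilde g}$
as a genuine constant function rather than merely an $L^1$ class: one
invokes standard elliptic regularity for the fourth-order operator
$P_g$, bootstrapping $u \in W^{2,2}_+(M)$ to $u \in \mathcal{C}^\infty$
once we know it solves $P_g(u) = \lambda u^{\frac{n+4}{n-4}}$ with a
constant $\lambda$, exactly as in the second-order Yamabe theory. I
would state this regularity step as a remark rather than prove it in
detail.
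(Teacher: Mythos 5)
Your proposal is correct and follows essentially the same route as the paper: in both directions one equates the Euler--Lagrange equation $P_g(u) = \|u\|_{p^\#}^{-p^\#}\mathcal{E}(u)\, u^{\frac{n+4}{n-4}}$ coming from \eqref{derivative_paneitz_func} with the transformation rule \eqref{trans_rule1}, so that criticality corresponds exactly to $Q_{\widetilde g} = \frac{2}{(n-4)}\|u\|_{p^\#}^{-p^\#}\mathcal{E}(u)$ being constant. Your added remarks on density of test functions and elliptic regularity are sensible refinements of points the paper leaves implicit, but they do not change the argument.
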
 

\begin{proof} First suppose $\widetilde g$ is a critical point of $\mathcal{Q}$, 
which implies 
$$\left . \frac{d}{dt} \right |_{t=0} \mathcal{Q}((u+tv)^{\frac{4}{n-4}} g) =0$$ 
for each sufficiently small $v$, and so \eqref{derivative_paneitz_func} gives 
$$ P_g(u) = \frac{\mathcal{E}(u) u^{\frac{n+4}{n-4}}}{\| u \|_{p^\#}^{p^\#}}.$$
Substituting this value for $P_g (u)$ into \eqref{trans_rule1} gives 
$$Q_{\widetilde g} = \frac{2}{(n-4) \| u \|_{p^\#}^{p^\#}} \mathcal{E}(u),$$
which is indeed a constant. 

On the other hand suppose $Q_{\widetilde g} = \overline {Q}$ is a 
constant, in which case \eqref{trans_rule1} gives 
$$P_g(u) = \frac{n-4}{2} \overline{Q} u^{\frac{n+4}{n-4}} \Rightarrow 
\mathcal{E}(u)  = \frac{n-4}{2} \overline{Q} \| u \|_{p^\#}^{p^\#}.$$
Substituting this value of $\mathcal{E}(u)$ 
into \eqref{derivative_paneitz_func} gives 
\begin{eqnarray*} 
\left. \frac{d}{dt} \right |_{t=0} \mathcal{Q} ((u+tv)^{\frac{4}{n-4}} g) & = & 
\frac{4}{(n-4) \| u\|_{p^\#}^2} \int_M v \left ( P_g(u) - \frac{n-4}{2} 
\overline{Q} u^{\frac{n+4}{n-4}} \right ) d\mu_g \\ 
& = & \frac{4}{(n-4) \| u \|_{p^\#}^2}\int_M v \left ( \frac{n-4}{2} 
\overline{Q} u^{\frac{n+4}{n-4}} - \frac{n-4}{2} \overline{Q} 
u^{\frac{n+4}{n-4}} \right ) d\mu_g \\ 
& = & 0 , 
\end{eqnarray*} 
and so $\widetilde g = u^{\frac{4}{n-4}} g$ is indeed a critical point of 
$\mathcal{Q}$. 
\end{proof} 

\subsection{Delaunay metrics} 
We seek constant $Q$-curvature metrics of the form $g_v 
= v^{\frac{4}{n-4}} (dt^2 + d\theta^2)$ on the cylinder $\R 
\times \Ss^{n-1}$, where $d\theta^2$ is the standard round metric 
on $\Ss^{n-1}$. It will be convenient to normalize the $Q$-curvature to 
be $\frac{n(n^2-4)}{8}$, the same value as the $Q$-curvature of the 
round metric on $\Ss^n$. By \eqref{trans_rule1}, the condition 
$Q_{g_v} = \frac{n(n^2-4)}{8}$ is equivalent to 
the partial differential equation 
\begin{eqnarray} \label{paneitz_pde1} 
\frac{n(n-4)(n^2-4)}{16} v^{\frac{n+4}{n-4}} & = & \frac{\partial^4 v}
{\partial t^4} - \left ( \frac{n(n-4) + 8}{2} \right ) \frac{\partial^2 v}
{\partial t^2} + \frac{n^2(n-4)^2}{16} v \\ \nonumber 
& = & \Delta_\theta^2 + 2 \Delta_\theta \frac{\partial^2 v}{\partial t^2} 
- \frac{n(n-4)}{2} \Delta_\theta v 
\end{eqnarray} 
where $\Delta_\theta$ is the Laplace-Beltrami metric on the 
sphere $\Ss^{n-1}$ with respect to the standard round metric. 

We restrict our attention to rotationally invariant metrics, 
so that $v$ is  a function of $t$ alone and \eqref{paneitz_pde1} 
becomes 
\begin{equation} \label{paneitz_ode1} 
\ddddot v - \left ( \frac{n(n-4)+8}{2} \right ) \ddot v + 
\frac{n^2(n-4)^2}{16} v = \frac{n(n-4)(n^2-4)}{16} v^{\frac{n+4}{n-4}}. 
\end{equation} 
We immediately find two special solutions, namely the cylindrical 
and spherical solutions 
\begin{equation} 
v_{cyl} = \left ( \frac{n(n-4)}{n^2-4} \right )^{\frac{n-4}{8}}, \qquad 
v_{sph} = (\cosh t )^{\frac{4-n}{2}}. 
\end{equation} 
Observe that, since $n>4$ we have $0 < v_{cyl} < 1$ and 
$$v_{sph} (0) = 1 = \max (v_{sph} (t)), \qquad \dot v_{sph}(t) < 0 
\textrm{ for }t>0, \qquad \dot v_{sph} (t) >0 \textrm{ for }t< 0.$$

Frank and K\"onig recently classified all positive global solutions 
of the ODE \eqref{paneitz_ode1}, proving there exists a periodic 
solution $v_a$ for each $a \in (v_{cyl},1)$ attaining its maximal 
value of $a$ when $t=0$. Moreover, they show any global, 
positive solution of \eqref{paneitz_pde1} must either have the 
form $v(t) = v_a(t+T)$ or $v(t) = (\cosh (t+T))^{\frac{4-n}{2}}$ for 
some $T \in \R$, 
or $v \equiv \left ( \frac{n(n-4)}{n^2-4} \right )^{\frac{n-4}{8}}$. We 
call $g_{v_a} = v_a^{\frac{4}{n-4}} (dt^2 + d\theta^2)$ the 
{\bf Delaunay metric} with Delaunay parameter $a$. 

Each solution $v_a$ is periodic with period $T_a$, attains its maximal 
value at each integer multiple of $T_a$, attains its minimal value at 
each half-integer multiple of $T_a$, and is symmtric about each 
of its critical points. Moreover, the period $T_a$ is an increasing 
function of $a$ with $\lim_{a \nearrow 1} T_a = \infty$ and 
$\lim_{a \searrow v_{cyl}} T_a = T_{cyl}$, where $T_{cyl}$ is 
the formal period of $v_{cyl}$, given by 
\begin{equation} \label{cyl_period} 
T_{cyl} = \frac{2\pi}{\mu}, \qquad \mu = \frac{1}{2} \sqrt{ 
\sqrt{n^4 - 64 n + 64} - n(n-4) + 8}. 
\end{equation} 
The period $T_{cyl}$ is the fundamental period of the linearization 
of the operator $P_g$, linearized about the cylindrical solution (see 
Section 3.3 of \cite{R}). One can also show $\sup v_a(t) 
\leq 1$ for each Delaunay parameter $a$. We 
let $\epsilon (a) = \min_{t \in \R} v_a(t)$. 

We define the energy 
\begin{equation} \label{del_energy} 
\mathcal{H} (v) = - \dot v \dddot v + \frac{1}{2} (\ddot v)^2 + \left ( 
\frac{n(n-4)+8}{4} \right ) \dot v^2 - \frac{n^2(n-4)^2}{32} v^2 + 
\frac{(n-4)^2(n^2-4)}{32} v^{\frac{2n}{n-4}}.
\end{equation} 
Differentiating $\mathcal{H}$ with respect to $t$ we find 
$$ \frac{d}{dt} \mathcal{H} = 
-\dot v \left ( \ddddot v - \left ( \frac{n(n-4)+8}{2} \right ) \ddot v + 
\frac{n^2(n-4)^2}{16} - \frac{n(n-4)(n^2-4)}{16} v^{\frac{n+4}{n-4}} 
\right ) ,$$
and so $\mathcal{H}(v)$ is constant if $v$ satisfies \eqref{paneitz_ode1}. 
Evaluating this energy on the cylindrical and spherical 
solutions we find 
\begin{equation} \label{cyl_sph_energy} 
\mathcal{H}_{cyl} = \mathcal{H} (v_{cyl}) = - \frac{n(n-4)^2}{8} 
\left ( \frac{n(n-4)}{n^2-4} \right )^{\frac{n-4}{4}} < 0, \qquad 
\mathcal{H}_{sph} = \mathcal{H} (v_{sph}) = 0. 
\end{equation} 

Restricting attention to the $(v,\dot v)$ in phase space we see 
that the level set $\{ \mathcal{H} = 0 \} \cap \{ \ddot v = 0, 
\dddot v = 0\}$ consists entirely of the solution curve of $v_{sph}$ 
together with the point $(0,0)$. For each 
$$0 < H < -\mathcal{H}_{cyl} = \frac{n(n-4)^2}{8} 
\left ( \frac{n(n-4)}{n^2-4} \right )^{\frac{n-4}{8}}$$ 
the level set $\{ \mathcal{H} = - H\} \cap \{ \ddot v = 0, \dddot v = 0\}$ is 
a closed curve associated to the Delaunay solution $v_a$ for 
some $a \in (v_{cyl}, 1)$. Combining Theorems 1, 2, and 3 
of \cite{vdB} we find that these solution curves do not cross and 
and that the energy level completely determines the Delaunay solution. 
In particular, we see 
that $\lim_{a \nearrow 1} \epsilon(a) = 0$. We sketch some of these 
solution curves in the phase plane in Figure \ref{phase_plane_fig} 
below. 

\begin{figure} [h] 
\centering
\begin{tikzpicture}
\coordinate (0) at (-2,-2); 
\draw[->] (0,1) -- (8,1) coordinate[label = {below:$v$}] (xmax);
\draw[->] (1,-2) -- (1,4) coordinate[label = {right:$\dot v$}] (ymax);
\draw[thick, blue,->>] (1,1) .. controls (2,4) and (5.5,4) .. (6,1);
\draw[thick, blue, ->>] (6,1) .. controls (5.5,-2) and (2,-2).. (1,1);
\draw[thick,red,->>] (2,1) .. controls (2.5,3) and (4.5,3) .. (5,1);
\draw[thick,red,->>] (5,1) .. controls (4.5,-1) and (2.5,-1) .. (2,1); 
\draw[thick, green,->>] (2.8,1) .. controls (3.1,2) and (4,2) .. (4.2,1); 
\draw[thick,green,->>] (4.2,1) .. controls (4,0) and (3.1,0) .. (2.8,1); 
\node at (3.5,1) {$*$};  
\node at (6.8,-0.8) {cylindrical solution}; 
\draw [->] (6.5,-0.5) -- (3.55,1); 
\draw [->] (7,4) -- (5.5,2.3); 
\node at (7.1,4.2) {spherical solution}; 
\end{tikzpicture} 
\caption{This figure shows the level curves of $\mathcal{H}$ 
in the $(v,\dot v)$ phase-plane.} \label{phase_plane_fig}
\end{figure}
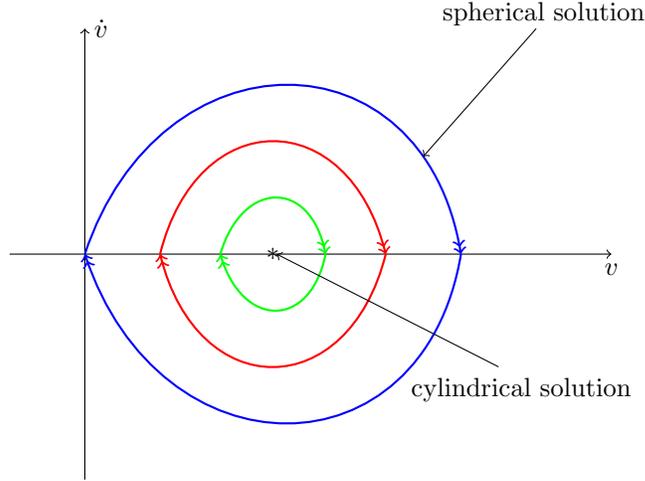

\subsection{Completion of the proof}

For each $T>0$ we consider metrics of the form $g_v = 
v^{\frac{4}{n-4}} (dt^2 + d\theta^2)$ on $\Ss^1_T \times 
\Ss^{n-1}$, identifying $\Ss^1_T$ with the 
interval $[-T/2,T/2]$. Observe that the metric $g_1 = dt^2 + 
d\theta^2$ has constant positive scalar curvature equal to 
$(n-1)(n-2)$, as well as positive $Q$-curvature $\frac{(n-1)((n-1)^2-4)}{8}$, 
and so we may apply the theorem of Hang and Yang to 
conclude 
$$\mathcal{Y}_4^+ ([dt^2 + d\theta^2], \Ss^1_T \times \Ss^{n-1})
= \mathcal{Y}_4 ([dt^2+d\theta^2], \Ss^1_T \times \Ss^{n-1}) < 
\mathcal{Y}_4^+ ( [g_0], \Ss^n).$$
Each critical point of $\mathcal{Q}$ 
in the conformal class $[dt^2 + d\theta^2]$ must be a 
constant $Q$-curvature metric on $\Ss^1 \times \Ss^{n-1}$. 
We pull this constant $Q$-curvature metric on $\Ss^1_T \times \Ss^{n-1}$ 
back to the universal cover $\R \times \Ss^{n-1}$, obtaining a 
smooth, positive. $T$-periodic function 
$v : \R \times \Ss^n \rightarrow (0, \infty)$
satisfying \eqref{paneitz_pde1}. As we discussed above, Frank and 
K\"onig classified these solutions as either the constant 
cylindrical solution $v_{cyl}$, translates of the spherical 
solution $v_{sph}$, or translates of a Delaunay solution 
$v_a$ for some $a \in (v_{cyl}, 1)$. 

The number of constant $Q$-curvature metrics in the 
conformal class $[dt^2 + d\theta^2]$ on $\Ss^1_T \times 
\Ss^{n-1}$ depends on $T$ in the following way. As in 
our previous discussion, we normalize the value of the 
$Q$-curvature to be $\frac{n(n^2-4)}{8}$. The cylindrical 
solution $v_{cyl}$ is the only solution when $0 < 
T \leq T_{cyl}$, where $T_{cyl}$ is given in \eqref{cyl_period}. 
For $T_{cyl} < T \leq 2T_{cyl}$ we have two constant $Q$-curvature 
metrics, namely the cylinder and the Delaunay metric with Delaunay 
parameter $a$ such that $T = T_a$. When $2T_{cyl} < T\leq 3T_{cyl}$ 
we obtain $3$ constant $Q$-curvature metrics, namely the 
cylindrical solution $v_{cyl}$, the Delaunay solution $v_a$ 
such that $T_a = T$, and the Delaunay solution $v_\alpha$ 
such that $T_\alpha = T/2$. Continuing inductively, when 
$(k-1)T_{cyl} < T \leq k T_{cyl}$ we obtain $k$ distinct  
constant $Q$-curvature metrics, namely the cylindrical 
solution $v_{cyl}$ together with the Delaunay solution $v_{a_l}$
with $T_{a_l} = T/l$ for each $l = 1,2,\dots,k-1$. 

For each $T> T_{cyl}$ the Delaunay solution $v_{a_1}$ such that 
$T_a = T$ solves the initial value problem $v_a(0) = a$, 
$\dot v_a(0) = 0$. By the results in \cite{vdB} these two initial 
conditions actually uniquely determine a solution 
of \eqref{paneitz_ode1}. Combining this uniqueness of 
with the fact that $\lim_{a \nearrow 1} T_a = \infty$ we 
conclude $v_a \rightarrow v_{sph} = (\cosh t)^{\frac{4-n}{2}}$ 
as $a \nearrow \infty$. 
Moreover, because each $\| v_a \|_\infty \leq 1$, this 
convergence is uniform on compact subsets by the Arzela-Ascoli 
theorem. 

Next we show that $v_{a_1}$ is the only stable critical 
point of $\mathcal{Q}$ among $\{ v_{cyl}, v_{a_1}, v_{a_2}, 
\dots, v_{a_{k-1}} \}$. The function $w_{a_l} = \dot v_{a_l}$ 
satisfies $L_{a_l} (w_{a_l}) = 0$, where $L_a$ is the linearization 
of \eqref{paneitz_pde1} about $v_{a_l}$. Observe that 
$$\{ t \in [-T/2, T/2] : w_{a_l} > 0\} = \bigcup_{j=-\lfloor l/2 
\rfloor}^{\lfloor l/2 \rfloor}  \left ( j T_{a_l} , \left ( \frac{2j+1}{2} \right ) 
T_{a_l} \right ),$$
where $\lfloor l/2 \rfloor$ denotes the greatest non-negative integer 
less than or equal to $l/2$. When $l \geq 2$ the number of nodal 
domains combined with Strum-Liouville theory implies $-L_{a_l}$ 
has at least $l$ negative eigenvalues, and so $v_{a_l}$ cannot be a 
stable critical point of $\mathcal{Q}$.  Furthermore the function 
$w_0 = \cos (\mu t)$, where $\mu$ is given by \eqref{cyl_period}, 
satisfies $L_{cyl} (w_0) =0$, where $L_{cyl}$ is the linearization 
of \eqref{paneitz_pde1} about $v_{cyl}$. When $T> 2 T_{cyl}$ the 
function $w_0$ has at least $2$ disjoint regions on which it is 
positive, so $v_{cyl}$ cannot be a stable critical point of $\mathcal{Q}$ 
for large values of $T$. 

We conclude that $v_a$ minimizes $\mathcal{Q}$ over 
the conformal class $[dt^2 + d\theta^2]$ on $\Ss^1_{T_a} 
\times \Ss^{n-1}$, and so 
\begin{eqnarray} \label{del_tot_q_curv}
\mathcal{Y}_4^+ ([dt^2 + d\theta^2], \Ss^1_{T_a} 
\times \Ss^{n-1}) & = & 
\mathcal{Q}(g_{v_a}) = \frac{2}{n-4} \frac{\int_{\Ss^1 
\times \Ss^{n-1}} Q_{g_{v_a}} 
d\mu_{g_{v_a}} }{(\operatorname{Vol}_{g_{v_a}}(\Ss^1 
\times \Ss^{n-1}) )^{\frac{n-4}{n}}} \\ \nonumber 
& = & \frac{2}{n-4} \cdot \frac{n(n^2-4)}{8}  \frac 
{\operatorname{Vol}_{g_{v_a}}(\Ss^1 \times \Ss^{n-1}))}
{(\operatorname{Vol}_{g_{v_a}}(\Ss^1 
\times \Ss^{n-1}) )^{\frac{n-4}{n}}} \\ \nonumber 
& = & \frac{n(n^2-4)}{4(n-4)} \frac{ \int_{-T_a/2}^{T_a/2} \int_{\Ss^{n-1}}
v^{\frac{2n}{n-4}} d\theta dt} { \left ( \int_{-T_a/2}^{T_a/2} 
\int_{\Ss^{n-1}} v^{\frac{2n}{n-4}} d\theta dt \right )^{\frac{n-4}{n}}} 
\\ \nonumber  
& = & \frac{n(n^2-4)}{4(n-4)} |\Ss^{n-1}|^{n/4}
\left ( \int_{-T_a/2}^{T_a/2} v_a^{\frac{2n}{n-4}} dt \right )^{n/4} .
\end{eqnarray}

Finally, we let $a \nearrow 1$ in \eqref{del_tot_q_curv} to see 
\begin{eqnarray*} 
\mathbb{Y}_4^+ (\Ss^1 \times \Ss^{n-1}) & \geq & 
\lim_{a \nearrow 1} \mathcal{Y}_4^+ ([dt^2 + d\theta^2], 
\Ss^1_{T_a} \times \Ss^{n-1}) \\ 
& = & \frac{n(n^2-4)}{4(n-4)} |\Ss^{n-1}|^{n/4} 
\left ( \int_{-\infty}^\infty (v_{sph} (t) )^{\frac{2n}{n-4}} dt 
\right )^{n/4} \\ 
& = & \frac{n(n^2-4)}{4(n-4)} |\Ss^{n-1}|^{n/4} \left ( 
\int_\R (\cosh t)^{-n} dt \right )^{n/4} \\ 
& = & \frac{n(n^2-4)}{4(n-4)} |\Ss^{n-1}|^{n/4} 
\left ( \int_0^\infty \left ( \frac{1+r^2}{2} \right )^{-n} r^{n-1}
dr \right )^{n/4}\\ 
& = & \mathcal{Q} (g_0) = \mathcal{Y}_4^+ ([g_0], 
\Ss^n) = \mathbb{Y}_4^+(\Ss^n),
\end{eqnarray*}  
where $r = e^{-t}$. This completes our proof.  \hfill $\square$

%\newpage

\begin {thebibliography} {999}

\bibitem{Aub} T. Aubin. {\it \'Equations diff\'erentielles non lin\'eaires et 
probl\`eme de Yamabe concernant la courbure scalaire.} J. Math. Pures  
Appl. {\bf 55} (1976), 269--296. 

\bibitem{vdB} J. van den Berg. {\it The phase-plane picture for a 
class of fourth-order conservative differential equations.} J. Differential 
Equations {\bf 161} (2000) 110--153. 

\bibitem {Bran1} T. Branson. {\it Differential operators canonically associated to a 
conformal structure.} Math. Scandinavia. {\bf 57} (1985), 293--345. 

\bibitem {Bran2} T. Branson. {\it Group representations arising from Lorentz 
conformal geometry.} J. Funct. Anal. {\bf 74} (1987), 199--291.

\bibitem {BG} T. Branson and A. R. Gover. {\it Origins, applications and generalisations 
of the $Q$-curvature.} Acta Appl. Math. {\bf 102} (2008), 131--146. 

\bibitem {CEOY} S.-Y. A. Chang, M. Eastwood, B. \O rsted, and P. Yang. 
{\it What is $Q$-curvature?} Acta Appl. Math. {\bf 102} (2008), 119--125. 

\bibitem {ER} P. Esposito and F. Robert. {\it Mountain-pass critical points 
for Paneitz-Branson operators.} Calc. Var. Partial Differential 
Equations {\bf 15} (2002), 493--517. 

\bibitem {FK} R. Frank and T. K\"onig. {\it Classification of 
positive solutions to a nonlinear biharmonic equation with critical 
exponent.} Anal. PDE {\bf 12} (2019), 1101--1113.

\bibitem {GHL} M. Gursky, F. Hang, and Y.-J. Lin. {\it Riemannian 
manifolds with positive Yamabe invariant and Paneitz operator.} 
Int. Math. Res. Not. {\bf 2016} (2016), 1348--1367. 

\bibitem {HY} F. Hang and P. Yang. {\it Lectures on the fourth order $Q$-curvature
equation.} Geometric analysis around scalar curvature, Lect. Notes Ser. Inst. Math. 
Sci. Natl. Univ. Singap. {\bf 31} (2016), 1--33.  

\bibitem {HY2} F. Hang and P. Yang. {\it $Q$-curvature on a class of manifolds 
with dimension at least $5$.} Comm. Pure Appl. Math. {\bf 69} (2016), 1452--1491. 

\bibitem {Pan1} S. Paneitz. {\it A quartic conformally covariant differential operator 
for arbitrary pseudo-Riemannian manifolds.} SIGMA Symmetry Integrability Geom. 
Methods Appl. {\bf 4} (2008), 3 pages (preprint from 1983). 

\bibitem {R} J. Ratzkin. {\it On constant $Q$-curvature metrics with 
isolated singularities.} preprint, {\tt arXiv:2001.07984}. 

\bibitem {Rob} F. Robert. {\it Fourth order equations with critical 
growth in Riemannian geometry.} private notes, available at 
{\tt http://www.iecl.univ-lorraine.fr/$\sim$Frederic.Robert/}

\bibitem{Sch} R. Schoen. {\it Conformal deformation of a Riemannian 
metric to constant scalar curvature.} J. Diff. Geom. {\bf 20} (1984), 479--495. 

\bibitem{Sch_var} R. Schoen. {\it Variational theory for the total scalar 
curvature functional for Riemannian metrics and related topics.} 
in {\it Topics in calculus of variations.} Lecture Notes in Math. {\bf 1365}, 
Springer-Verlag (1989), 120--154.  

\bibitem {Y} H. Yamabe. {\it On the deformation of Riemannian structures 
on a compact manifold.} Osaka Math. J. {\bf 12} (1960), 21--37. 

\end {thebibliography}

\end {document}